\newtheorem{theorem}{Theorem}[section]
\newtheorem{lemma}[theorem]{Lemma}
\newtheorem{proposition}[theorem]{Proposition}
\newtheorem{definition}[theorem]{Definition}
\newtheorem{remark}[theorem]{Remark}
\newtheorem{corollary}[theorem]{Corollary}
\newtheorem{example}[theorem]{Example}
\newcommand{\Om}{{\Omega}}
\newcommand{\al}{{\alpha}}
\newcommand{\la}{\lambda}
\newcommand{\vr}{{\varrho}}
\newcommand{\sB}{\mathscr{B}}
\newcommand{\sD}{\mathscr{D}}
\newcommand{\R}{{\mathbb R}}
\newcommand{\cO}{{\cal O}}
\newcommand{\cM}{{\cal M}}
\newcommand{\cR}{{\cal R}}
\newcommand{\cL}{{\cal L}}
\newcommand{\hf}{{\frac12}}
\newcommand{\g}{{\nabla}}
\newcommand{\pd}{\partial}
\newcommand{\di}{{\rm div\, }}
\newenvironment{declaration}[1]{\trivlist
\item[\hskip \labelsep{\bf #1 }]\ignorespaces}{\endtrivlist}
\newenvironment{proofof}[1]{\begin{declaration}{#1}}{\hfill
$\square$ \end{declaration}}
\newenvironment{proof}{\begin{proofof}{Proof.}}{\end{proofof}}
\begin{document}
\title{Discrete data assimilation  via \\ Ladyzhenskaya  squeezing property  in \\  the 3D viscous primitive equations
}
\author{Igor Chueshov\footnote{ Department of Mechanics and Mathematics,
 Karazin Kharkov National University,  Kharkov, 61022,  Ukraine, e-mail:
 chueshov@karazin.ua},
 }
\maketitle

\begin{abstract}
We discuss the discrete data assimilation problem for
 the  3D viscous primitive equations
 arising in the  modeling of  large scale phenomena
in oceanic dynamics. Our main result states possibility of asymptotically reliable
prognosis based on a discrete sequence of finite number of scalar observations.
Our method is quite general and can be applied to a wide class of dissipative systems. 
\smallskip
\par\noindent
{\bf Keywords: }  3D viscous primitive equations, data assimilation, determining functionals.
\par\noindent
{\bf 2010 MSC:}   35R60, 37N10, 76F25, 76F55
\end{abstract}

\section*{Introduction}
Data assimilation problem is a question how to incorporate available observation data  in
computational schemes to improve quality of predicting  of the future evolution of the corresponding
dynamical system.
This problem has a long history and was studied by many authors at different levels
(see, e.g., the monographs \cite{data-book,kal2003-data} and the references therein).
\par
In this paper we consider the case when observations of the system are making in  some sequence
$\{t_n\}$ of moments of time and use the same formulation of the data assimilation problem
as   in \cite{HO-Titi-data-as2011}.
\par
Our main goal is to demonstrate the role of the so-called  Ladyzhenskaya squeezing
property (which is valid for a wide class parabolic type PDEs,   see
\cite{lad1,lad2}) in the  solving of data assimilation problems.
As in \cite{HO-Titi-data-as2011} we
also involve the notion of determining modes or, more generally,
determining functionals.
However our method is different from the approach developed in \cite{HO-Titi-data-as2011}
for the 2D Navier--Stokes equations. Moreover, the method based on the squeezing property
looks more  general and can be applied to a wide class of dissipative systems
admitting the Ladyzhenskaya property.
\par
In this paper  as a model we choose the system
 of the
 3D viscous primitive equations
which arise in geophysical fluid dynamics  for modeling  large scale phenomena
in oceanic motions.
In this case  data assimilation problem is related with reliability of weather predictions.
 \par
 The paper is organized as follows. In Section~\ref{sec:3PE} we describe the model and quote its
 properties which need for data assimilation.
 The main technical tools are the Ladyzhenskaya squeezing
property and the theory of determining functionals which we discuss shortly in Section~\ref{sec:def-f}.
In Section~\ref{sec:data} formulate  the data assimilation problem,
introduce the notion of asymptotically reliable prognosis and  prove our main result
concerning a finite number of scalar observations in a discrete sequence of times.

\section{3D Primitive equations}\label{sec:3PE}

The primitive equations are based on the so-called hydrostatic approximation of the 3D
 Navier-Stokes equations for velocity field $u$ and
coupled to thermo- and salinity  equations which are taken into account
via  small  variation of density (or equivalently via buoyancy $b$),
see, e.g., the survey~\cite{temam2008-srw} and the literature cited there.
Below to simplify the presentation we consider
periodic boundary conditions of the same type as in \cite{petcu} and~\cite{temam2008-srw}
(see also \cite{Chueshov-squeszing2012}).
 However, it should be noted that
our results remains  valid in the case of  free type boundary conditions
 like in \cite{CaoTiti}.
The case of
 mixed free-Dirichlet boundary conditions (see \cite{kuka}) is more complicated
and requires a separate consideration.
\par
Let
 \[
 \cO= (0,L_1)\times (0,L_2)\times (-L_3/2,L_3/2) \subset \R^3.
 \]
We denote by $\bar{x}=(x,z)=(x_1,x_2,z)$ the spatial variable in $\cO$ and
suppose that $\g$, $\di$ and $\Delta$ are the gradient, divergence and  Laplace operators
in the (horizontal)  variable $x=(x_1,x_2)$.
\par
After the reduction based on  the  hydrostatic relation we
arrive (see, e.g., \cite{temam2008-srw}) at the  following  equations
for the horizontal  fluid velocity
\[
v(\bar{x},t)=(v^1(\bar{x},t);v^2(\bar{x},t)),~~ \bar{x}=(x,z),
\]
 for the (surface) pressure $p=p(x,t)$ and for the buoyancy $b= b(\bar{x},t)$:
 \begin{align}\label{fl.1a}
   v_t +(v,\g)v & -\left[\int_0^{z} \di vd\xi \right]\pd_z v -\nu [\Delta v +\pd_{zz} v]
 +f v^{\perp}  \notag
  \\ & =-\nabla \left[ p(x,t)+\int_0^{z} b d\xi\right]
 +G_f\quad {\rm in\quad} \cO
   \times(0,+\infty),
\end{align}
    \begin{equation}\label{fl.4a}
   b_t +(v,\g) b - \left[\int_0^{z} \di vd\xi \right] \pd_zb-\nu [\Delta b +\pd_{zz} b] =G_b\quad {\rm in\quad} \cO
   \times(0,+\infty),
  \end{equation}
 where $\nu>0$ is the dynamical viscosity, $f$ is the Coriolis parameter and
 $v^{\perp}=(-v^2;v^1)$. The functions represents  $G_f$ and $G_b$ are volume sources
 related to the fluid field and the buoyancy.
 As in   \cite{petcu,temam2008-srw}
the equations in (\ref{fl.1a}) and (\ref{fl.4a})
  supplied with the conditions:
   \begin{equation}\label{fl.4a-bc1}
 {\rm div}\,\int_{-L_3/2}^{L_3/2} v dz=0;~~
v ~\mbox{is periodic in $\bar{x}$ and even in $z$},~~\int_{\cO} v d\bar{x}=0;
  \end{equation}
   \begin{equation}\label{fl.4a-bc2}
  b ~\mbox{is periodic in $\bar{x}$ and odd in $z$}.
   \end{equation}
 We also impose  initial data:
   \begin{equation}\label{fl.4a-id}
 v(0)=v_0,~~~b(0)=b_0.
  \end{equation}
 We note that the vertical component of the velocity field has the form
 \begin{equation*}
w(\bar x,t)=-\int_0^{z} \di v(x,\xi,t) d\xi~~\mbox{for every $\bar{x}=(x,z)\in \cO$}
\end{equation*}
and thus the full velocity field $(v^1,v^2,w)$ satisfies incompressibility condition.
We  emphasize  that the (surface) pressure $p$  in  (\ref{fl.1a}) depends
on 2D (horizontal) variable $x$ only. Basing on  this   observation
a new effective approach \cite{CaoTiti}  for proving of the global well-posedness for  problems like
(\ref{fl.1a}) and (\ref{fl.4a}) have been implemented, see \cite{CaoTiti} and the discussion therein.
\par
The  system of the viscous 3D primitive equations  was intensively
 studied for the different types of boundary conditions
 (see  the literature cited in  the survey \cite{temam2008-srw}). The existence  of week solutions
 was established in \cite{LTW-ocean}; for
  global well-posedness of strong solutions we refer to
 \cite{CaoTiti} and also to \cite{kobelkov,kuka,petcu}.
The uniqueness of weak solutions is still unknown.
The question on a global attractor for the viscous 3D primitive equations
was considered  in \cite{ju} (see also the papers \cite{petcu} and \cite{Chueshov-squeszing2012} devoted
to the periodic case).

\par
\medskip\par

We denote  by $\dot{H}_{per}^s(\cO)$ the Sobolev space of order $s$
consisting of periodic functions  such that $\int_{\cO} f d\bar{x}=0$ and  introduce the following spaces:
\par
\begin{equation*}
V_s=\left\{
v=(v^1;v^2)\in [\dot{H}_{per}^s(\cO)]^2 :\; v^i~\mbox{is even in}~z,\;
 {\rm div}\,\int_{-L_3/2}^{L_3/2} v dz=0
  \right\}
\end{equation*}
for $s\ge 0$.
We equip $H\equiv V_0$ with $L_2$-norm $\|\cdot\|$
and denote by $(\cdot,\cdot)$ the corresponding inner product.
It is convenient to endow the spaces $V_1$ and $V_2$  with the norms  $\|\cdot\|_{V_1}= \|\nabla_{x,z}\cdot\|$
and $\|\cdot\|_{V_2}= \|\Delta_{x,z}\cdot\|$.
Here and below we  use the notations $\g_{x,z}$  and $\Delta_{x,z}$
for gradient and Laplace operation in the 3D variable $(x,z)$.

\par
We also introduce  state spaces for  the buoyancy variable by the formulas
\begin{equation*}
E_s=\left\{
b\in \dot{H}_{per}^s(\cO) :\; b~\mbox{is odd in}~z
  \right\},~~~s\ge0.
\end{equation*}
We
equip them with the standard Sobolev norms. We  suppose $W_s=V_s\times E_s$
with the corresponding  (Hilbert) product norms.
\par
As it was already mentioned, starting with \cite{CaoTiti} the global well-posedness of
the equations  in  \eqref{fl.1a} and \eqref{fl.4a} was studied by many authors~\cite{kobelkov,kuka,petcu,temam2008-srw}.
The following
 result on well-posedness
 of strong solutions in  the case of periodic boundary conditions  was basically proved in
 \cite{petcu} (see also \cite{temam2008-srw} and Remark~2.2 in \cite{Chueshov-squeszing2012}).

\begin{proposition}[\cite{petcu}]\label{pr:titi}
Let $G_f\in V_0$, $G_b\in E_0$, and $U_0=(v_0;b_0)\in W_1$. Then  problem (\ref{fl.1a})--(\ref{fl.4a-id})
 has a unique strong solution $(v(t);b(t))$:
\begin{equation*}
U(t;U_0)\equiv (v(t);b(t))\in C(\R_+; W_1)\cap L_2(0,T; W_2),~~~\forall T>0.
\end{equation*}
This solution generates a dynamical system $(S_t, W_1)$ with
the evolution operator $S_t$
defined by the relation $S_tU_0=U(t;U_0)$. The operator $S_t$
satisfies  the  Lipschitz property:
\[
\|S_tU-S_tU_*\|_{W_1}\le C_{T,\vr}\|U-U_*\|_{W_1},~~ t\in [0,T],
\]
for every $T>0$ and $U,U_*\in \sB_1(\vr)\equiv\{U : \|U\|_{W_1}\le \vr\}$.
\end{proposition}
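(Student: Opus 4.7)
The plan is the standard three-step scheme for dissipative parabolic systems---Galerkin approximation, \emph{a priori} estimates, and passage to the limit---with the Lipschitz bound obtained afterwards by an energy argument for the difference of two solutions. The only non-routine step, historically, is the global-in-time $W_1$-estimate, which exploits the hydrostatic structure.

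For existence, I would project \eqref{fl.1a}--\eqref{fl.4a} onto the $N$-dimensional subspaces spanned by eigenfunctions of $-\Delta_{x,z}$ compatible with the constraints \eqref{fl.4a-bc1}--\eqref{fl.4a-bc2}, and solve the resulting ODE system locally to obtain approximate solutions $U_N=(v_N;b_N)$. Testing \eqref{fl.1a} with $v_N$ and \eqref{fl.4a} with $b_N$ gives the $W_0$-energy identity: the $3D$ divergence-free extension $(v^1,v^2,w)$ together with the periodicity/parity structure cancels the cubic terms, the Coriolis term is antisymmetric, the surface pressure $p=p(x,t)$ is orthogonal to $v$ thanks to the divergence-free vertical average, and the buoyancy coupling is absorbed by Young's inequality. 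This yields a uniform $L_\infty(\R_+;W_0)\cap L_2(0,T;W_1)$ bound. The decisive global $W_1$-estimate, obtained by pairing with $-\Delta_{x,z}v_N$ and $-\Delta_{x,z}b_N$, requires controlling the trilinear term $\int \partial_z v\cdot \Delta_{x,z}v\bigl[\int_0^z\di v\,d\xi\bigr]$ through the Cao--Titi barotropic/baroclinic splitting together with anisotropic Ladyzhenskaya inequalities adapted to the periodic slab $\cO$ as in \cite{petcu}; this produces a differential inequality of the form
\begin{equation*}
\frac{d}{dt}\|U_N\|_{W_1}^2 + \nu\|U_N\|_{W_2}^2 \le C\bigl(1+\|U_N\|_{W_1}^{q}\bigr),
\end{equation*}
which keeps $\|U_N(t)\|_{W_1}$ uniformly bounded on every finite interval and supplies the $L_2(0,T;W_2)$ bound after integration. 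Aubin--Lions compactness then permits the limit $N\to\infty$, and the standard Lions argument gives time-continuity with values in $W_1$.

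For uniqueness and the Lipschitz estimate, I would take two solutions $U,U_*$ starting in $\sB_1(\vr)$, set $\widetilde U = U-U_*$, and test the difference equation in $W_1$. Using the uniform $W_1$-bound of both solutions and their $L_2(0,T;W_2)$-integrability, every nonlinear difference can be dominated by $C\bigl(1+\|U\|_{W_2}^2+\|U_*\|_{W_2}^2\bigr)\|\widetilde U\|_{W_1}^2$, producing
\begin{equation*}
\frac{d}{dt}\|\widetilde U\|_{W_1}^2 \le g(t)\|\widetilde U\|_{W_1}^2,\qquad g\in L_1(0,T),
\end{equation*}
and Gr\"onwall delivers the stated bound with $C_{T,\vr}=\exp\bigl(\tfrac12\int_0^T g(s)ds\bigr)$. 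The main obstacle is the global-in-time $W_1$-estimate itself: the hydrostatic reduction destroys the natural energy cancellation of vertical convection present in the full $3D$ Navier--Stokes system, and only the Cao--Titi observation that $p$ is independent of $z$, combined with anisotropic interpolation in the slab geometry, turns a local-in-time result into a global one.
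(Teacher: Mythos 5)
The paper does not prove this proposition at all: it is quoted from the literature (attributed to \cite{petcu}, with the well-posedness machinery going back to \cite{CaoTiti}; see also \cite{temam2008-srw} and Remark~2.2 of \cite{Chueshov-squeszing2012}), so there is no in-paper argument to compare against. Your outline --- Galerkin approximation respecting the constraints \eqref{fl.4a-bc1}--\eqref{fl.4a-bc2}, the $L_2$ energy identity with cancellation of the convective terms and orthogonality of the $z$-independent pressure, the global $H^1$ estimate via the Cao--Titi barotropic/baroclinic splitting with anisotropic inequalities on the slab, Aubin--Lions compactness, and a Gr\"onwall argument for uniqueness and the Lipschitz bound on $\sB_1(\vr)$ --- is exactly the strategy of the cited proofs, so at the level of a sketch you are reproducing the intended argument.

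One point in your write-up is stated too loosely to carry the main burden. The inequality $\frac{d}{dt}\|U_N\|_{W_1}^2+\nu\|U_N\|_{W_2}^2\le C\bigl(1+\|U_N\|_{W_1}^{q}\bigr)$ cannot, by itself, ``keep $\|U_N(t)\|_{W_1}$ uniformly bounded on every finite interval'' unless $q\le 2$; and a direct estimate of the hydrostatic trilinear term $\int_{\cO}\bigl[\int_0^z\di v\,d\xi\bigr]\pd_z v\cdot\Delta_{x,z}v$ only yields superquadratic powers, which is precisely why global strong well-posedness was open before \cite{CaoTiti}. The actual closing of the argument is hierarchical: one first obtains an $L^6$ (in \cite{CaoTiti}, $L^6$ of the baroclinic fluctuation), an $H^1$ bound for the barotropic (vertically averaged) $2$D part, and a bound on $\pd_z v$, each stage supplying time-integrable quantities; the final $H^1$ inequality is then \emph{linear} in $\|U_N\|_{W_1}^2$ with coefficients in $L_1(0,T)$, and Gr\"onwall gives the global bound. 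You gesture at the splitting, but as written the single polynomial differential inequality is not what the splitting produces, and the global claim does not follow from it. The same caveat applies, more mildly, to the Lipschitz step: the domination of the difference terms by $C\bigl(1+\|U\|_{W_2}^2+\|U_*\|_{W_2}^2\bigr)\|\widetilde U\|_{W_1}^2$ requires absorbing $\|\widetilde U\|_{W_2}$ factors into the viscous term via anisotropic/interpolation estimates, so the dissipative term should be kept in the difference inequality rather than discarded at the outset.
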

 If
 $G_f\in V_{m-1}$, $G_b\in E_{m-1}$ and $U_0=(v_0;b_0)\in W_m$
for some $m\ge 2$ then (see \cite{petcu,temam2008-srw}) the solution $U$
lies in the class
$C(\R_+; W_m)\cap L^{loc}_2(\R_+; W_{m+1})$.
This observation makes it possible to
use smooth approximations of solutions in the  calculations
with multipliers (see, e.g., \cite{Chueshov-squeszing2012}).

For our goal the following assertion is important.

\begin{proposition}\label{pr:frame-for-kicks}
 Let the hypotheses of Proposition~\ref{pr:titi} be in force.
 Then
 \begin{itemize}
   \item There exist positive constants $a_0$ and $a_1$ such that
   \begin{equation}\label{S-sq1}
    \|S_tU\|\le e^{-a_0t}\|U\| +a_1 K_G,~~~ t\ge 0, ~~ U\in W_1,
   \end{equation}
   where $K_G^2=\|G_f\|^2+\|G_b\|^2$.
   \item If we assume in addition that
      \begin{equation}\label{GG-cond}
G_f\in V_1, ~G_b\in E_1~~ and~ also~~
   (\pd_zG_f;\pd_zG_b)\in \big[L_6(\cO)\big]^3,
\end{equation}
then for every $\vr>0$ and $0<\al\le\beta<+\infty$
   there exists the constant $C(\al,\beta, \vr)>0$ such that
\begin{equation}\label{S-sq2}
    \|S_tU\|_{W_2}\le C(\al,\beta, \vr)~~\mbox{for every}~ t\in [\al,\beta], ~~\|U\|\le \vr.
   \end{equation}

 \end{itemize}
\end{proposition}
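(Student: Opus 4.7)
The plan is to treat the two parts by the standard dissipative energy-plus-smoothing scheme, paying attention to the specific structure of \eqref{fl.1a}--\eqref{fl.4a-bc2}.

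For part~(i) I would take the $L_2(\cO)$-scalar product of \eqref{fl.1a} with $v$ and of \eqref{fl.4a} with $b$ and add. Several contributions drop out: the Coriolis term satisfies $(fv^\perp,v)=0$; the combined horizontal/vertical advection $(v,\g)v-[\int_0^z\di v\,d\xi]\pd_z v$ equals the full 3D convection by the incompressible field $(v^1,v^2,w)$ with $w=-\int_0^z\di v\,d\xi$, hence is skew-symmetric and vanishes when tested against $v$ under the periodicity/parity conditions \eqref{fl.4a-bc1}; the surface-pressure term disappears after one integration by parts and use of $\di\int_{-L_3/2}^{L_3/2} v\,dz=0$. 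The buoyancy--pressure coupling $\int_{\cO}\g\Phi\cdot v\,d\bar x$, $\Phi:=\int_0^z b\,d\xi$, is controlled by $L_3\|b\|\,\|\g v\|$ via the one-dimensional Poincaré inequality in $z$ and absorbed into the dissipation by Young's inequality. After a final Poincaré estimate on $\cO$ I arrive at
\[
\tfrac{d}{dt}\|U(t)\|^2+2a_0\|U(t)\|^2\le C_1 K_G^2,
\]
and Gronwall's lemma (together with $\sqrt{a+b}\le\sqrt{a}+\sqrt{b}$) delivers \eqref{S-sq1}.

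For part~(ii) I would proceed in two stages. \emph{Stage~A} ($H\to W_1$ smoothing): integrating the energy identity from part~(i) over $[0,\al/2]$ gives $\int_0^{\al/2}\|U(s)\|_{W_1}^2\,ds\le C(\vr)$, so one may pick $t_\star\in(0,\al/2)$ with $\|U(t_\star)\|_{W_1}^2\le 2C(\vr)/\al$. I then invoke the global $W_1$-regularity theory of \cite{CaoTiti,petcu} (as encapsulated in Proposition~\ref{pr:titi}) to propagate this bound and conclude $\sup_{t\in[\al/2,\beta]}\|U(t)\|_{W_1}\le C'(\al,\beta,\vr)$. \emph{Stage~B} ($W_1\to W_2$ smoothing): using \eqref{GG-cond} I would test with $-\Delta_{x,z}U_t$ (or, equivalently, with $\Delta_{x,z}^2U$ after elliptic regularity), estimate the trilinear terms by anisotropic Sobolev embeddings based on the $W_1$-bound from Stage~A, and derive a differential inequality of the form
\[
\tfrac{d}{dt}\|U(t)\|_{W_2}^2+\nu\|U(t)\|_{W_3}^2\le Q\bigl(\|U(t)\|_{W_1}\bigr)\bigl(1+\|U(t)\|_{W_2}^2\bigr)+C\bigl(\|G_f\|_{V_1}^2+\|G_b\|_{E_1}^2+\|\pd_zG_f\|_{L_6}^2+\|\pd_zG_b\|_{L_6}^2\bigr).
\]
Multiplying by the weight $t-\al/2$ and integrating on $[\al/2,\beta]$ then yields \eqref{S-sq2}.

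The main obstacle is Stage~B. The anisotropic structure of the primitive equations, in particular the transport term $-[\int_0^z\di v\,d\xi]\pd_z v$ with no intrinsic vertical damping for $w$, makes the $H^2$-estimate delicate and typically requires $L^p$-techniques with $p>2$ and the barotropic/baroclinic splitting developed in \cite{CaoTiti,kuka}. The hypothesis $(\pd_zG_f,\pd_zG_b)\in[L_6(\cO)]^3$ in \eqref{GG-cond} is exactly what closes these $L^6$-based estimates. Once the nonlinear bounds of \cite{CaoTiti,petcu,Chueshov-squeszing2012} are in hand, what remains is essentially bookkeeping to ensure the constants depend only on $\al$, $\beta$ and $\vr$.
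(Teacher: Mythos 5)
Your proposal is correct and follows essentially the same route as the paper: part~(i) by the standard $v$, $b$ multipliers with Poincar\'e and Gronwall, and part~(ii) by the barotropic/baroclinic splitting of \cite{CaoTiti,petcu} combined with higher-order multipliers such as $\Delta_{x,z}^2 v$, $\Delta_{x,z}^2 b$ (licensed by periodicity) and the $L_6$ hypothesis on $(\pd_z G_f;\pd_z G_b)$, with the hard nonlinear estimates deferred to \cite{CaoTiti,petcu,Chueshov-squeszing2012} exactly as the paper does. Your explicit two-stage smoothing with a time weight is just a more detailed write-up of the same argument the paper sketches.
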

\begin{proof}
  The first statement is achieved by the standard multipliers $v$ and $b$ applied to
  \eqref{fl.1a} and \eqref{fl.4a}, see \cite{petcu,temam2008-srw}, for instance.
  \par
  The second statement  is a  more complicated and based mainly on the calculations
given in \cite{CaoTiti} and \cite{petcu}. The corresponding argument involves the splitting
of the system into 2D Navier-Stokes type equations coupled  with 3D Burgers type model
(see \cite{CaoTiti} and also \cite{petcu,temam2008-srw})
and consists of several steps based on the application of the same multipliers as in
\cite{CaoTiti,petcu,temam2008-srw}. The spatial periodicity of the system allows us to use freely
higher order multipliers like $\Delta_{x,z}^2 v$ and $\Delta_{x,z}^2 b$.
For some related details we refer to the paper \cite{Chueshov-squeszing2012} which contains a very similar
argument in the proof of Theorem 3.1 on the existence of a smooth absorbing set.
\end{proof}
Proposition~\ref{pr:frame-for-kicks} implies that the system $(S_t,W_1)$ possesses
an absorbing set which is bounded in $W_2$. More precisely we have the following assertion.
\begin{corollary}[Smooth Absorbing Ball]\label{co:absorbing}
Let
\eqref{GG-cond}   be in force.
Then
the\-re exists $K>0$ such that the ball
$$
\sB\equiv \sB_{2}(K)=\{ U\in W_2: \|U\|_{W_2}\le K\}
$$
is absorbing for the dynamical
system $(S_t, W_1)$ generated by problem (\ref{fl.1a})--(\ref{fl.4a-id}), i.e.,
for any bounded set $B$ in $W_1$ there is $t_B$ such that
\[
S_tB\subset \sB ~~~\mbox{for all}~ t\ge t_B.
\]%
\end{corollary}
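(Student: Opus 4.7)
The plan is to combine the two estimates of Proposition~\ref{pr:frame-for-kicks} in the standard ``dissipativity followed by instantaneous smoothing'' pattern. Proposition~\ref{pr:frame-for-kicks} already does almost all the work: \eqref{S-sq1} gives a globally absorbing ball in the $L_2$-topology $\|\cdot\|$, while \eqref{S-sq2} upgrades such an $L_2$-bound to a $W_2$-bound after an arbitrarily short waiting time. Since $B$ is bounded in $W_1\hookrightarrow H$, both statements apply to trajectories starting in $B$.

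First I would fix a bounded set $B\subset W_1$ and set $R=\sup_{U\in B}\|U\|$, which is finite by the continuous embedding $V_1\subset V_0$ (and similarly for $E_1\subset E_0$). Define the ``target level'' in $H$ by $\vr:=1+a_1K_G$. The bound \eqref{S-sq1} gives
\begin{equation*}
\|S_tU\|\le e^{-a_0 t}R+a_1K_G\le \vr\qquad\text{for all }U\in B,\; t\ge t_1(B),
\end{equation*}
where $t_1(B)$ is chosen so that $e^{-a_0 t_1}R\le 1$.

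Next I would use the semigroup property $S_t=S_1\circ S_{t-1}$ together with \eqref{S-sq2}. For $t\ge t_1(B)+1$ and $U\in B$, the element $\whu:=S_{t-1}U$ lies in $W_1$ and satisfies $\|\whu\|\le\vr$. Applying \eqref{S-sq2} with $\al=\beta=1$ and the level $\vr$ yields
\begin{equation*}
\|S_tU\|_{W_2}=\|S_1\whu\|_{W_2}\le C(1,1,\vr)=:K.
\end{equation*}
Setting $t_B:=t_1(B)+1$ and $\sB=\sB_2(K)$ finishes the argument.

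There is no real obstacle here since everything is inherited from Proposition~\ref{pr:frame-for-kicks}; the only thing to check carefully is that the $W_2$-regularisation step \eqref{S-sq2} can be applied with input $S_{t-1}U$, which is legitimate because $S_{t-1}U\in W_1$ by Proposition~\ref{pr:titi} and $\|S_{t-1}U\|\le\vr$ by the first step. The constant $K$ then depends only on $\vr$ (hence only on $K_G$), not on $B$, which is what makes the ball $\sB_2(K)$ uniformly absorbing.
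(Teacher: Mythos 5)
Your proof is correct and follows essentially the same route as the paper: use \eqref{S-sq1} to obtain the uniform $H$-bound $1+a_1K_G$ after a time $t_1(B)$ depending on the bound of $B$, then apply the smoothing estimate \eqref{S-sq2} with $\al=\beta=1$ via the semigroup property to land in the ball of radius $K=C(1,1,1+a_1K_G)$ in $W_2$. The paper's proof is just a more terse version of exactly this two-step argument.
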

\begin{proof}
It follows from \eqref{S-sq1} that
\[
\|S_tU\|\le 1+ a_1 K_G~~\mbox{for all}~~ t\ge t_B,
\]
and thus by \eqref{S-sq2} we have that
\[
\|S_{t+1}U\|_{W_2} \le K\equiv C(1,1, 1 +a_1 K_G)~~\mbox{for all}~~ t\ge t_B,
\]
i.e. the ball $\sB$ possesses the desired property.
\end{proof}
We can also prove the Lipschitz property in $H$ provided one of   two solutions belongs to $W_2$.
\begin{proposition}\label{pr:Lip-H}
Let $U_1(t)$ and  $U_2(t)$ be two strong  solutions
to  (\ref{fl.1a})--(\ref{fl.4a-id}). Assume that $\|U_1(t)\|_{W_2}\le R$ for all $t\in [0,T]$
for some $T>0$. Then
\begin{equation}\label{LipH-1}
\|  U_1(t)-U_1(t)\|\le C_T(R) \|  U_1(0)-U_1(0)\|,~~~\forall\, t\in [0,T].
\end{equation}
\end{proposition}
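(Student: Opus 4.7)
The plan is to derive a standard $L_2$ energy estimate for the difference $W=U_1-U_2=(v;b)$, where $v=v_1-v_2$ and $b=b_1-b_2$, and close it by Gronwall, using the $W_2$-bound on $U_1$ to dominate every nonlinear contribution.

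Subtracting the $U_2$-equations from the $U_1$-equations and rearranging each bilinear nonlinearity as a sum involving $W$ and $U_2$ (for instance $(v_1,\g)v_1-(v_2,\g)v_2=(v,\g)v_1+(v_2,\g)v$, and analogously for the vertical-integral and buoyancy transports), I take the inner product of the velocity equation with $v$ and of the buoyancy equation with $b$ and add. The Coriolis piece drops by $v\cdot v^\perp=0$; the pressure contribution $-\g(p_1-p_2)$ paired with $v$ vanishes after horizontal integration by parts because $p_1-p_2$ depends on $x$ alone while $\int_{-L_3/2}^{L_3/2}v\,dz$ is horizontally divergence-free.

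The crucial cancellation for the transports driven by $U_2$ is that the full 3D velocity $(v_2,w_2)$ with $w_2=-\intl_0^z\di v_2\,d\xi$ is divergence-free on $\cO$, so $(v_2,\g)v-[\intl_0^z\di v_2\,d\xi]\pd_z v$ rewrites as $((v_2,w_2)\cdot\g_{x,z})v$ and integrates to zero against $v$, and similarly on the buoyancy side. The surviving nonlinear contributions are (i) the horizontal transport by $W$ of $U_1$, such as $\int(v,\g)v_1\cdot v\,d\bar x$, (ii) the barotropic pieces $\int[\intl_0^z\di v\,d\xi]\pd_z v_1\cdot v\,d\bar x$ together with its $b$-analog, and (iii) the buoyancy--pressure coupling $-\int[\g\intl_0^z b\,d\xi]\cdot v\,d\bar x$. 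I estimate (i) by the 3D Ladyzhenskaya inequality $\|f\|_{L_3}^2\le C\|f\|\,\|\g_{x,z}f\|$ combined with $\|\g v_1\|_{L_3}\le C\|v_1\|_{W_2}\le CR$; for (ii) I use $\|\intl_0^z\di v\,d\xi\|_{L_2(\cO)}\le L_3\|\di v\|$ and $\pd_z v_1\in L_6(\cO)$ (from $v_1\in H^2(\cO)$); and (iii) is handled by a horizontal integration by parts together with Cauchy--Schwarz in $z$, giving a $\sqrt{L_3}\|v\|\,\|\g b\|$ bound. In every case Young's inequality absorbs the gradient factors into the dissipation $\nu\|\g_{x,z}(v,b)\|^2$ coming from the Laplacian terms, leaving the scalar Gronwall inequality
\begin{equation*}
\tfrac{d}{dt}\big(\|v\|^2+\|b\|^2\big)+\nu\|\g_{x,z}(v,b)\|^2\le C(R,\nu)\big(\|v\|^2+\|b\|^2\big),
\end{equation*}
from which \eqref{LipH-1} follows with $C_T(R)=\exp(C(R,\nu)T/2)$ (the evident typo $U_1-U_1$ on the right-hand side of \eqref{LipH-1} being read as $U_1-U_2$).

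The main obstacle is the barotropic term in (ii): unlike a genuine 3D transport it does not close by divergence-free cancellation, since $\intl_0^z\di v\,d\xi$ is not the vertical antiderivative of an expression that pairs with $\pd_z$ of the advected field to form a total $(x,z)$-divergence, so one must spend the $W_2$-regularity of $v_1$ by hand via the 3D Hölder splitting described above. Once this term is controlled, the remaining estimates parallel the familiar 2D Navier--Stokes Lipschitz bound.
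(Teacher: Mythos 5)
Your proof is correct and follows essentially the same route as the paper: the paper writes the system abstractly as $\pd_t U+\nu AU+B(U,U)+CU=G$ and uses the two properties $(B(U_2,V),V)=0$ and $(B(V,U_1),V)\le C\|V\|_{W_1}^{3/2}\|V\|^{1/2}\|U_1\|_{W_2}$ together with Gronwall's lemma, which are exactly the cancellation (divergence-free full velocity $(v_2,w_2)$) and the $W_2$-weighted estimate of the remaining transport and barotropic terms that you verify term by term. Your concrete derivation simply supplies the proof of the quoted bilinear estimate \eqref{LipH-2}, so there is no gap and no genuinely different idea involved.
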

\begin{proof}
We note (see, e.g., \cite{temam2008-srw}) that problem  (\ref{fl.1a})--(\ref{fl.4a-id}) can be written in the form
\[
\pd_t U+\nu AU+B(U,U)+CU=G,~~~ U(0)=U_0,
\]
where
$A$ is a positive self-adjoint operator in $H$
 generated by  the bilinear form
\begin{equation}\label{a-form}
a(U,U_*)=\int_\cO \left[ \g_{x,z} v\cdot\g_{x,z} v_* + \g_{x,z} b\cdot\g_{x,z} b_*\right] dxdz,
\end{equation}
where $U=(v;b)$ and $U_*=(v_*;b_*)$ are  from $W_1$,  $C$ is a bounded skew-symmetric operator, and
$B(U,U)$ is a quadratic operator possessing the properties
\begin{equation}
 B(U^*,U),U)=  0, ~~~
  B(U,U^*),U)\le  C\|U\|^{3/2}_{W_1} \|U\|^{1/2} \|U^*\|_{W_2}
\label{LipH-2}
\end{equation}
for every $U^*\in W_2$ and $U\in W_1$. Thus for the difference $V=U_1(t)-U_2(t)$
we have that
\[
\hf\pd_t\|V(t)\|^2 +\nu a(V(t),V(t))+  B(V,U_1),V)=0
\]
which, via \eqref{LipH-2} and Gronwall's lemma, implies the relation in \eqref{LipH-1}.
\end{proof}

We note that the operator $A$ generated by   form  \eqref{a-form} has a discrete spectrum. This means that
there exists an orthonormal basis $\{ e_k\}$ in $H$ such  that
\begin{equation}\label{a-form2}
A e_k=\la_k e_k,~~~0<\la_1\le \la_2\le\ldots,~~~\lim_{k\to+\infty}\la_k=+\infty.
\end{equation}
We denote by $P_N$ the orthoprojector onto  Span$\{e_1,\ldots,e_N\}$
and $Q_N=I-P_N$.
\par
The following Ladyzhenskaya squeezing property (see \cite{lad1,lad2}) of the evolution operator $S_t$ is the main ingredient of our further
data assimilation considerations.

\begin{proposition}[Squeezing property]\label{pr:frame-for-kicks2}
 Let    \eqref{GG-cond} be in force.
Then for every $q<1$, $0<\al\le \beta<+\infty$ and $L$  there exists $N_*=N(\al,\beta,L,q)$
 such that
\begin{equation*}
\|Q_N[S_t U-S_tU_*]\|_{W_1}\le q \|U-U_*\|_{W_1},~~\forall\, t\in [\al,\beta],
,~~\forall\, N\ge N_*,
\end{equation*}
for any $U$ and $U_*$ from  the set
\[
\sD=\left\{ U\in W_2\,: \; \|S_t U\|_{W_2}\le L~~ for~all~t\in [0,\beta]\right\}.
\]
 \end{proposition}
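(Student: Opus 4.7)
The plan is to set $V(t)=S_tU-S_tU_*$ and derive an exponentially decaying differential inequality for $\|Q_NV(t)\|_{W_1}^2$, with the decay rate $\nu\lambda_{N+1}$ coming from the spectral gap of $A$ on the range of $Q_N$, and the forcing controlled by the uniform Lipschitz bound from Proposition~\ref{pr:titi}. Since $\lambda_N\to+\infty$, choosing $N$ large will produce the required contraction by $q$ uniformly on $[\al,\beta]$.

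First I would write the abstract form of the difference used in the proof of Proposition~\ref{pr:Lip-H},
\[
\pd_t V+\nu AV+CV+\big[B(U_1,U_1)-B(U_2,U_2)\big]=0,\qquad V(0)=U-U_*,
\]
where $U_i=S_tU$, $S_tU_*$ lie in $\sD$, so $\|U_i(t)\|_{W_2}\le L$ on $[0,\beta]$. Since $Q_N$ commutes with $A$, testing the $Q_N$-projected equation against $AQ_NV$ yields
\[
\tfrac12\tfrac{d}{dt}\|Q_NV\|_{W_1}^2+\nu\|Q_NV\|_{W_2}^2
=-\big(B(V,U_1)+B(U_2,V)+CV,\;AQ_NV\big).
\]

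Next I would bound the right-hand side using the quadratic properties in \eqref{LipH-2} together with their higher-order counterparts obtained in \cite{CaoTiti,petcu,temam2008-srw}. Because both $U_1,U_2\in W_2$ with $W_2$-norm bounded by $L$, each of the three terms can be estimated by $C(L)\|V\|_{W_1}\|Q_NV\|_{W_2}$ (the Coriolis term $CV$ being harmless since $C$ is bounded and skew-symmetric). Applying Young's inequality and absorbing half of $\nu\|Q_NV\|_{W_2}^2$ on the left gives
\[
\tfrac{d}{dt}\|Q_NV\|_{W_1}^2+\nu\|Q_NV\|_{W_2}^2\le C(L)\|V\|_{W_1}^2.
\]
From the spectral expansion \eqref{a-form2} we have $\|Q_NV\|_{W_2}^2\ge \lambda_{N+1}\|Q_NV\|_{W_1}^2$, so
\[
\tfrac{d}{dt}\|Q_NV\|_{W_1}^2+\nu\lambda_{N+1}\|Q_NV\|_{W_1}^2\le C(L)\|V(t)\|_{W_1}^2.
\]

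The last step invokes Proposition~\ref{pr:titi} with $\vr=L$ and $T=\beta$ to conclude $\|V(t)\|_{W_1}\le C_{\beta,L}\|V(0)\|_{W_1}$ for $t\in[0,\beta]$. Integrating the previous inequality and using Gronwall yields
\[
\|Q_NV(t)\|_{W_1}^2\le e^{-\nu\lambda_{N+1}t}\|V(0)\|_{W_1}^2+\frac{C(L)\,C_{\beta,L}^2}{\nu\lambda_{N+1}}\|V(0)\|_{W_1}^2,
\qquad t\in[\al,\beta].
\]
Since $\lambda_N\to+\infty$, one can choose $N_*=N_*(\al,\beta,L,q)$ so that for every $N\ge N_*$ both terms on the right are bounded by $\tfrac12 q^2\|V(0)\|_{W_1}^2$, which gives the squeezing inequality.

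The main obstacle is the nonlinear estimate in the second step. The bilinear form $B$ for the 3D primitive equations is genuinely harder than in the 2D Navier--Stokes setting, and testing against $AQ_NV$ (which lives at the second-derivative level) forces one to reproduce the splitting/trilinear calculations of \cite{CaoTiti,petcu} used already in Propositions~\ref{pr:titi} and~\ref{pr:frame-for-kicks}, in order to obtain a bound of the form $C(L)\|V\|_{W_1}\|Q_NV\|_{W_2}$ in which the $W_2$ factor is absorbed and the residual depends on $V$ only through its $W_1$-norm. Once this step is carried out, the spectral/Gronwall part of the argument is entirely standard.
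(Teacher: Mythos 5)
Your overall architecture (project the difference equation by $Q_N$, use the spectral gap $\la_{N+1}$ from \eqref{a-form2}, Gronwall, then choose $N$ large) is exactly the ``Ladyzhenskaya squeezing'' scheme the paper has in mind; the paper itself gives no details and simply refers to the argument of Theorem~3.5 in \cite{Chueshov-squeszing2012}. However, there is a genuine gap, and it sits precisely at the point you yourself flag and then assume away: the estimate $|(B(V,U_1)+B(U_2,V)+CV,\,AQ_NV)|\le C(L)\|V\|_{W_1}\|Q_NV\|_{W_2}$ is not attainable for the primitive-equation nonlinearity. The horizontal advection and Coriolis terms do obey such a bound, but the hydrostatic terms built from the vertical velocity $w(v)=-\int_0^z \di v\, d\xi$ do not: since one only has $\|w(v_V)(x,\cdot)\|_{L^\infty_z}\le C\|\di v_V(x,\cdot)\|_{L^2_z}$, the factor $w(v_V)$ is controlled merely in $L^2_xL^\infty_z$ by $\|V\|_{W_1}$, while $\pd_z u_1$ with $\|U_1\|_{W_2}\le L$ is not in $L^\infty_xL^2_z$; the Cao--Titi/Ladyzhenskaya anisotropic interpolation then produces bounds of the type $C(L)\,\|V\|_{W_1}^{1/2}\|V\|_{W_2}^{1/2}\,\|AQ_NV\|$ (and the term $w(v_2)\pd_zV$ behaves the same way). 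The offending factor is the full $\|V\|_{W_2}$, not $\|Q_NV\|_{W_2}$, so it cannot be absorbed into $\nu\|Q_NV\|_{W_2}^2$, and your differential inequality with right-hand side $C(L)\|V\|_{W_1}^2$ does not follow.

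The argument can be repaired, but it requires an extra ingredient you do not state: a $W_1$-energy estimate for the difference of strong solutions on $\sD$, of the form $\sup_{[0,\beta]}\|V(t)\|^2_{W_1}+\int_0^\beta\|V(s)\|^2_{W_2}\,ds\le C(L,\beta)\|V(0)\|^2_{W_1}$ (this is itself a nontrivial anisotropic-trilinear computation, and is essentially the content of the cited Theorem~3.5). With it, one keeps the right-hand side of the $Q_N$-inequality as an integrable function $h(t)\le C(L)\|V\|_{W_1}\|V\|_{W_2}$, and variation of constants with the kernel $e^{-\nu\la_{N+1}(t-s)}$ together with a Young splitting of $h$ gives, for $t\in[\al,\beta]$, a bound $\big[e^{-\nu\la_{N+1}\al}+\e+C(\e,L,\beta)\la_{N+1}^{-1}\big]\|V(0)\|^2_{W_1}$, which is $\le q^2\|V(0)\|^2_{W_1}$ for $N$ large. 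So the spectral/Gronwall shell is fine, but as written your key estimate is both unproved and over-claimed, and the pointwise absorption step it feeds must be replaced by the time-integrated argument; carrying out those anisotropic estimates is the actual substance of the proposition.
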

 \begin{proof}
 The same type argument as in the proof of
   Theorem~3.5 in \cite{Chueshov-squeszing2012} leads to the desired result.
 \end{proof}

\section{Observation/measurement functionals}\label{sec:def-f}

To describe observation/measurement procedure we use  a finite family $\cL$
of linear continuous functionals $\{l_j: j=1,\ldots,N\}$ on the phase space.
If $U$ is  a phase vector which corresponds to some state of the system,
then, similar to \cite{HO-Titi-data-as2011}, we can treat the values
$\{l_j(U): j=1,\ldots,N\}$ as a set of observation data.
Our task is now to determinate the state $U$ with the help of observation functionals
$\{l_j\}$. Therefore to describe admissible  observations it is natural to involve
well-developed theory of determining functionals. This theory starts with
 the pioneering paper \cite{FP-det} on   determining modes and  was
 developed  by many authors for different classes of PDE systems
and different families of functionals (see the recent  discussion in  \cite{FJKT-det}).
 For a general theory  of the determining functionals we refer to \cite{CT-det},
see also \cite{Chueshov,cl-mem,cl-book} for a development of this theory
based on the notion of the completeness defect.
The concept  of completeness defect which was introduced
in \cite{Chu97,Chu98} seems a convenient tool in characterization of observation functionals.

\begin{definition}\label{de7.8.21}
{\rm
Let $V$ and $H$ be  reflexive Banach spaces and
$V$ is continuously and densely embedded into $H$.
{\it The completeness defect} of a set ${\cal L}$ of linear functionals
on $V$ with respect to $H$ is the value
\begin{equation}\label{7.8.21}
\epsilon_{\cal L}(V,H) =\sup\{ \parallel w \parallel_{H}  :
w\in V,\, l (w)=0,\, l\in {\cal L},\, \parallel w \parallel _{V} \le 1
\}\;.
\end{equation}
}
\end{definition}
It is  obvious
that $\epsilon_{{\cal L}_1} (V,H)\ge\epsilon_{{\cal L}_2} (V,H)$
provided Span${\cal L}_1\subset{\rm Span}{\cal L}_2$. In addition,
$\epsilon_{\cal L} (V,H)=0$ if and only if the class of functionals
${\cal L}$ is complete in $V$; this means that the property $l(w)=0$ for all
$l\in {\cal L} $ implies $w=0$.
 We can also generalize the notion of
the completeness defect by considering
 some seminorms  $\mu_V$ in \eqref{7.8.21}
instead of the norm $\|\cdot\|_H$ (see, e.g., \cite{cl-book}).
\par
Below we use the so-called interpolation operators which are related with
the set of functionals  given.
To describe their properties we need the following notion.
\begin{definition}\label{de7.8.22}
{\rm
Let $V\subset H$ be  separable Hilbert spaces and
$R$ be a linear operator from $V$ into $H$. As in \cite{Au72}  the value
$$
e^H_V (R) =\sup\{\Vert u-Ru\Vert_H\; :\; \Vert u\Vert_V\le 1\}\equiv \|I-R\|_{V\mapsto H}
$$
is said to be  the {\em global approximation error}
in $H$ arising in the approximation of
elements $v\in V$ by elements $Rv$. Here and below $\| \cdot \|_{V\mapsto H}$
denotes the operator norm for linear mappings from $V$ into $H$.
}
\end{definition}
The following assertion (see \cite{Chu98,Chueshov}
for the proof) shows that the completeness defect provides us with a bound
from below for the best possible global approximation error.
\begin{theorem}\label{th7.8.22}
Let $V$ and $H$ be the separable Hilbert spaces such that
$V$ is compactly and densely embedded into $H$.
Let ${\cal L}$ be a set of linear functionals on $V$.
Then we have the following relations
$$
\epsilon_{\cal L}(V,H)=\min\{ e^H_V (R)\; :\; R\in
{\cal R}_{\cal L}\},
$$
where ${\cal R}_{\cal L}$
is the family of linear  bounded operators $R: V\mapsto H$ and
such that $Rv=0$ for all
$v\in {\cal L}^{\perp}=\{ v\in V\; :\; l(v)=0, \; l\in {\cal
L}\}$. Moreover, we have that
\begin{equation}\label{eL-opt}
    \epsilon_{\cal L}(V,H)= e^H_V (I-Q_\cL)=\sup\{ \| Q_\cL u\|_H\; :\;\Vert u\Vert_V\le 1\},
\end{equation}
where  $Q_\cL$ is the orthoprojector in $V$ onto ${\cal L}^{\perp}$.
\end{theorem}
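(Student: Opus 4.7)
The plan is a three-step sandwich argument. Let $R_\star:=I-Q_{\cL}$. I will show in order: (i) $\epsilon_{\cL}(V,H)\le e^H_V(R)$ for every $R\in\cR_{\cL}$; (ii) $R_\star\in\cR_{\cL}$, and by unwinding the definition of $e^H_V$ directly, $e^H_V(R_\star)=\sup\{\|Q_{\cL}u\|_H:\|u\|_V\le 1\}$, which is the second equality asserted in the theorem; (iii) this supremum equals $\epsilon_{\cL}(V,H)$. Together, (i) and (iii) force every $R\in\cR_{\cL}$ to satisfy $e^H_V(R)\ge\epsilon_{\cL}(V,H)=e^H_V(R_\star)$, so the infimum over $\cR_{\cL}$ is attained at $R_\star$.

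For (i), I would pick any $R\in\cR_{\cL}$ and any $w\in V$ with $l(w)=0$ for every $l\in\cL$ and $\|w\|_V\le 1$. Then $w\in\cL^{\perp}$, so by the very definition of $\cR_{\cL}$ we have $Rw=0$; hence
\[
\|w\|_H=\|w-Rw\|_H\le e^H_V(R)\,\|w\|_V\le e^H_V(R).
\]
Taking the supremum over such $w$ yields (i).

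For (ii), I note that $\cL^{\perp}=\bigcap_{l\in\cL}\ker l$ is closed in $V$, so the $V$-orthoprojector $Q_{\cL}$ is well defined and $R_\star$ is a bounded operator from $V$ to $V$, and hence to $H$ via the continuous embedding. For $v\in\cL^{\perp}$ one has $Q_{\cL}v=v$, so $R_\star v=0$, proving $R_\star\in\cR_{\cL}$. The claimed formula for $e^H_V(R_\star)$ is then immediate from Definition~\ref{de7.8.22}, since $u-R_\star u=Q_{\cL}u$.

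For (iii), one direction is direct: if $\|u\|_V\le 1$, then $Q_{\cL}u\in\cL^{\perp}$ with $V$-norm at most $1$, so by the definition of the completeness defect, $\|Q_{\cL}u\|_H\le\epsilon_{\cL}(V,H)$; passing to the supremum over $u$ gives $e^H_V(R_\star)\le\epsilon_{\cL}(V,H)$. The reverse inequality is just (i) applied to $R=R_\star$. I do not anticipate a serious obstacle here; the argument is purely functional-analytic and relies only on closedness of $\cL^{\perp}$ and the definition of the operator norm. The compact embedding $V\hookrightarrow H$ does not actually enter the proof — it is a standing hypothesis that keeps the quantities finite and, if desired, lets one promote the defining supremum of $\epsilon_{\cL}(V,H)$ to a maximum realized by a concrete witness.
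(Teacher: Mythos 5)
Your proof is correct: the sandwich argument (lower bound $\epsilon_{\cL}(V,H)\le e^H_V(R)$ for every $R\in\cR_{\cL}$ since $R$ annihilates $\cL^{\perp}$, plus the verification that $R_\star=I-Q_{\cL}$ lies in $\cR_{\cL}$ and attains the bound because $Q_{\cL}u\in\cL^{\perp}$ with $\|Q_{\cL}u\|_V\le\|u\|_V$) establishes both equalities, including attainment of the minimum. The paper itself does not reproduce a proof but refers to \cite{Chu98,Chueshov}, and your argument is essentially the standard one given there; your observation that the compactness of the embedding is not needed for these identities (only continuity of the embedding and of the functionals, which guarantees $\cL^{\perp}$ is closed so that $Q_{\cL}$ exists) is accurate.
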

One can show (see \cite{Chueshov}) that any operator  $R\in {\cal R}_{\cal L}$
has the form
\begin{equation}\label{R-op}
Rv=\sum_{j=1}^N l_j(v) \psi_j, ~~~ \forall\, v\in V,
\end{equation}
where $\{\psi_j\}$ is an arbitrary finite set of elements from $V$.
This why   ${\cal R}_{\cal L}$
is called the set of interpolation operators corresponding  to the set $\cL$.
An operator $R\in \cR_\cL$ is called {\em Lagrange } interpolation operator, if it has form
\eqref{R-op} with $\{\psi_j\}$ such that $l_k(\psi_j)=\delta_{kj}$. In
the case of Lagrange operators we have that $R^2=R$, i.e., $R$ is a projector.

\par
We also note that the operator $Q_\cL$ in \eqref{eL-opt} has the following structure
\[
Q_\cL=I-P_\cL ~~~\mbox{with}~~P_\cL v=\sum_{j=1}^N (\xi_j,v)_V \xi_j, ~~~ \forall\, v\in V,
\]
where $\{\xi_j\}$ is the orthonormal basis in the orthogonal supplement  $\cM_\cL$ to the annulator
 ${\cal L}^{\perp}$ in $V$. We call $P_\cL$ the {\em optimal} interpolation operator  corresponding  to the set $\cL$.
\par

Our main example is related with the eigen-basis of the operator
$A$ defined by the form \eqref{a-form}.

\begin{example}[Modes]\label{modes}
{\rm
Denote by
${\cal L}$ the set of functionals
${\cal L}=\{ l_j(u)=(u,e_j)\; :\; j=1,2,\ldots,N\}$,
where $ \{ e_k \}$ are eigenfunctions of the operator $A$ given by the form \eqref{a-form},
see \eqref{a-form2}.
The optimal interpolation operator $P_\cL$ is Lagrange  in this case  and has the form
\begin{equation}\label{RL-opt}
P_\cL v=\sum_{j=1}^N (e_j,v) e_j, ~~~ \forall\, v\in W_1.
\end{equation}
Moreover,
$\epsilon_{\cal
L}(W_i,H)=e^H_{W_i}(P_\cL)=\la_{N+1}^{-i/2}$, $i=1,2$.
Thus the completeness defect and the global approximation error $ \|I-P_\cL\|_{W_i\mapsto H}$
can be made small after an appropriate choice of $N$.
}
\end{example}
%

\section{Discrete data assimilation}\label{sec:data}
We consider
the discrete data assimilation problem in the sense due to \cite{HO-Titi-data-as2011}.
The paper \cite{HO-Titi-data-as2011} is  focused  on the case where the measurement data is taken at
a sequence of discrete times $t_n$ in contrast
with the papers \cite{Olson-titi2013,Olson-titi2003} which consider  \emph{continuous}
data assimilation. All these papers deal with for the incompressible two-dimensional Navier--Stokes equations.

Following the  idea presented in \cite{HO-Titi-data-as2011} we accept the following
definition.
\begin{definition}\label{de:data-assim}
{\rm
  Let $U(t)=S_tU_0$ be a solution to (\ref{fl.1a})--(\ref{fl.4a-id}) with initial data $U_0$ at time $t_0$.
   Let $\cL=\{l_j\}$ be a finite family of functionals on $H$ (each functional $l_j$
   is interpreted as a single observational  measurement). Let
   $R_\cL$ be some Lagrange interpolation operator related with $\cL$
   such that the sequence $\{r_\cL^n\equiv R_\cL U(t_n)\}$ represents the (joint)
   observational measurements of the reference solution $U(t)$ at a sequence $\{t_n\}$ of times, we call the sequence $\{r_\cL^n\}$  {\em observation values}.
 Now we can construct {\em prognostic values} at time $t_n$ by the formula
\begin{equation}\label{progn1}
u_n= (1-R_\cL) S_{t_n-t_{n-1}} u_{n-1}+r_\cL^n ,~~~ n=1,2,\ldots, 
\end{equation}
where $u_0$ is (unknown) vector which, according to \cite{HO-Titi-data-as2011},
 corresponds to an initial guess  of the reference
solution $U(t_0)$.
 We  can also  define the prognostic (piecewise continuous) trajectory
as
\begin{equation}\label{progn-traj}
u(t) = S_{t- t_n} u_n~~\mbox{for}~~ t \in  [t_n, t_{n+1}), ~~~ n=0,1,2,\ldots
\end{equation}
We say that the prognosis is {\em asymptotically reliable} at a sequence of times $t_n$
if
\[
\|U(t_n)-u_n\|_{W_1} \to0  ~~~\mbox{as}~~n\to +\infty.
\]
}
\end{definition}
Our goal is to find conditions on $R_\cL$, $t_n$ and $\eta$ which guarantee that the
prognosis based on a finite number of single observations  is  asymptotically reliable.
\par
We assume that $0<\alpha\le t_{n+1}-t_n\le \beta<+\infty$ for some positive $\al$ and $\beta$.
\par
The following assertion gives us a dissipativity property for prognostic values
which is important for our application of the Ladyzhenskaya squeezing property.
\begin{lemma}\label{le:dis-prognosis}
Assume that $\|U(t)\|_{W_2}\le K$ for all $t\ge t_0$.
Let
$$
\| R_\cL\|_{W_2\mapsto H}\le c_0~~\mbox{and}~~  \| 1-R_\cL\|_{H\mapsto H}\le c_1~
with~ c_1<e^{a_0\al},
$$
where $a_0$ is the constant in \eqref{S-sq1}.
Then there exists  $n_*>0$ such that
\begin{equation}\label{un-dis1}
\|u_n\|\le 1+\varrho_*~~~ \mbox{for all}~~ n\ge n_*,
\end{equation}
 where  $\varrho_*=(a_1K_G+c_0K)(1-c_1e^{-a_0\al})^{-1}$.
 If we assume in addition that  $\| 1-R_\cL\|_{W_2\mapsto W_2}\le c_2$, then
 \begin{equation}\label{un-dis2}
\|u_n\|_{W_2}\le \varrho \equiv  c_2C(\al,\beta, 1+\vr_*)+ (1+c_2) K~~~ \mbox{for all}~~ n\ge m_*\equiv 1+n_*,
\end{equation}
where $C(\al,\beta, \vr)$ is the constant from \eqref{S-sq2}.
\end{lemma}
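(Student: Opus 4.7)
The strategy is to iterate the definition \eqref{progn1} in the $H$-norm, exploiting three ingredients that are already in place: the bound $\|1-R_\cL\|_{H\to H}\le c_1$, the $H$-dissipation \eqref{S-sq1} of $S_t$, and the smoothness of the reference trajectory together with $\|R_\cL\|_{W_2\to H}\le c_0$. Concretely, I would take norms in $H$ on \eqref{progn1} to obtain
\[
\|u_n\| \;\le\; \|1-R_\cL\|_{H\to H}\,\|S_{t_n-t_{n-1}} u_{n-1}\| \;+\; \|R_\cL U(t_n)\|,
\]
then bound the first factor by $c_1$ and apply \eqref{S-sq1} with $t_n-t_{n-1}\ge\alpha$ to $\|S_{t_n-t_{n-1}} u_{n-1}\|$; for the remainder I would use that $R_\cL$ acts on the $W_2$-element $U(t_n)$ with $\|U(t_n)\|_{W_2}\le K$. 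This yields a scalar recurrence of the form $\|u_n\|\le q\|u_{n-1}\|+b$ whose contraction factor $q=c_1 e^{-a_0\alpha}$ is strictly less than $1$ by the standing assumption $c_1<e^{a_0\alpha}$.

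Iterating this recurrence gives the geometric estimate $\|u_n\|\le q^n\|u_0\|+b(1-q)^{-1}$. Since $q<1$, the transient term $q^n\|u_0\|$ decays to zero, and the limiting value $b(1-q)^{-1}$ is bounded by the quantity $\varrho_*$ defined in the statement. One then chooses $n_*$ so that $q^n\|u_0\|\le 1$ for $n\ge n_*$, which yields \eqref{un-dis1}.

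For the $W_2$-bound I would apply the same splitting of \eqref{progn1}, but now measure both pieces in $W_2$. The piece $(1-R_\cL)S_{t_n-t_{n-1}}u_{n-1}$ is handled with the additional hypothesis $\|1-R_\cL\|_{W_2\to W_2}\le c_2$ together with the smoothing estimate \eqref{S-sq2}: for $n-1\ge n_*$, part~(i) provides $\|u_{n-1}\|\le 1+\varrho_*$, and \eqref{S-sq2} then gives $\|S_{t_n-t_{n-1}}u_{n-1}\|_{W_2}\le C(\alpha,\beta,1+\varrho_*)$. The remainder $R_\cL U(t_n)$ is estimated by rewriting it as $U(t_n)-(I-R_\cL)U(t_n)$, whence $\|R_\cL U(t_n)\|_{W_2}\le(1+c_2)K$. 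Summing the two contributions reproduces exactly the constant $\varrho$ of \eqref{un-dis2}, valid from $n\ge m_*=n_*+1$ onward.

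There is no real obstacle here; the lemma is essentially a bookkeeping exercise. The only subtlety worth flagging is the interplay of norms: the $H$-dissipativity \eqref{S-sq1} is indispensable for part~(i) because it supplies the contraction $e^{-a_0\alpha}$ which is then absorbed into $c_1 e^{-a_0\alpha}<1$, while part~(ii) is a one-shot smoothing step that lifts the $H$-control of $u_{n-1}$ to a $W_2$-control of $u_n$ via \eqref{S-sq2}, which explains why the second conclusion only begins one step later at $m_*=n_*+1$.
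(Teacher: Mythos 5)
Your argument is essentially the paper's own proof: the same splitting of \eqref{progn1} in the $H$-norm yields the scalar recurrence with contraction factor $c_1e^{-a_0\al}<1$ via \eqref{S-sq1} and $\|R_\cL U(t_n)\|\le c_0K$, and the same splitting in $W_2$ (with $\|R_\cL U(t_n)\|_{W_2}\le(1+c_2)K$ and the smoothing bound \eqref{S-sq2} applied one step after \eqref{un-dis1} holds) gives \eqref{un-dis2}. The only cosmetic difference is that your recurrence carries the factor $c_1$ also on $a_1K_G$, which matches $\varrho_*$ exactly when $c_1\le 1$ and otherwise changes only the (inessential) value of the absorbing constant, a bookkeeping point shared with the paper's own formula.
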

\begin{proof}
One can see from Proposition~\ref{pr:frame-for-kicks} that
 \[
\|u_n\|\le c_1e^{-a_0\al} \|u_{n-1}\|+a_1K_G+c_0K,~~n=1,2,\ldots
\]
This implies that
  \[
\|u_n\|\le q_*^n \|u_0\|+\varrho_*,~~n=1,2,\ldots
\]
where $q_*= c_1e^{-a_0\al}$. This yields \eqref{un-dis1}.
\par
To prove \eqref{un-dis2} we note that
\[
\|u_n\|_{W_2}\le c_2 \|S_{t_n-t_{n-1}}u_{n-1}\|_{W_2}+ (1+c_2)K,~~n=1,2,\ldots
\]
Hence \eqref{un-dis2} follows from \eqref{S-sq2} and \eqref{un-dis1}.
\end{proof}
In the case of (spectral) modes we have the following assertion.
\begin{corollary}\label{co:diss}
 Let $\cL$ be the same as in Example~\ref{modes}.
 If we take in \eqref{progn1} $R_\cL$ to be  the interpolation operator given by \eqref{RL-opt},
 then there exist positive constants  $C(K_G, K,\al,\beta)$ and  $m_*$
   such that
   \begin{equation*}
\|u_n\|_{W_2}\le \varrho \equiv  C(K_G, K,\al,\beta)~~~ \mbox{for all}~~ n\ge m_*.
\end{equation*}
\end{corollary}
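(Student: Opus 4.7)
The plan is to invoke Lemma~\ref{le:dis-prognosis} directly with the choice $R_\cL = P_\cL$ from \eqref{RL-opt}, so the whole task reduces to verifying the three operator-norm hypotheses of that lemma and checking that the resulting constants $c_0, c_1, c_2$ can be chosen as absolute constants independent of $K, K_G, \al, \beta$.

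First I would observe that since $\{e_k\}$ is orthonormal in $H$, $P_\cL$ is the orthogonal projection in $H$ onto $\spann\{e_1,\ldots,e_N\}$. Therefore $\|P_\cL\|_{H\mapsto H}\le 1$ and $\|I-P_\cL\|_{H\mapsto H}\le 1$, so I can take $c_1=1$; because $a_0>0$ and $\al>0$, the critical strict inequality $c_1<e^{a_0\al}$ of Lemma~\ref{le:dis-prognosis} then holds automatically. For the $W_2\to H$ bound, combining $\|P_\cL v\|\le\|v\|$ with the continuous embedding $W_2\hookrightarrow H$ yields $\|P_\cL v\|\le c_0\|v\|_{W_2}$, where $c_0$ is the norm of that embedding.

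The slightly more delicate step is the $W_2\to W_2$ bound. I would exploit the fact that the operator $A$ generated by the form \eqref{a-form} is (a version of) $-\Delta_{x,z}$ on $H$, so $\|\cdot\|_{W_2}$ is equivalent to $\|A\cdot\|$. Since the $e_k$ diagonalize $A$, the projection $P_\cL$ commutes with $A$, and hence for every $v\in W_2$,
\[
\|(I-P_\cL)v\|_{W_2}\sim \|A(I-P_\cL)v\| = \|(I-P_\cL)Av\|\le \|Av\|\sim \|v\|_{W_2},
\]
which gives a universal constant $c_2$. This is the only step requiring any verification, and it becomes routine once the commutativity of $P_\cL$ with $A$ is noted.

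With the three bounds in hand, Lemma~\ref{le:dis-prognosis} produces $\|u_n\|_{W_2}\le \varrho$ for all $n\ge m_*$, and inspection of the explicit formulas $\varrho_*=(a_1K_G+c_0K)(1-c_1e^{-a_0\al})^{-1}$ and $\varrho=c_2 C(\al,\beta,1+\varrho_*)+(1+c_2)K$ shows that the dependence of $\varrho$ and $m_*$ is exactly on $K_G,K,\al,\beta$ as claimed. The main obstacle is essentially trivial here; the content of the corollary is really the observation that modal projections satisfy all three conditions of Lemma~\ref{le:dis-prognosis} with universal constants.
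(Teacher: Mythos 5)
Your proposal is correct and follows essentially the same route as the paper: the paper's proof is simply the remark that for the modal projector one may take $c_0=\la_1^{-1}$ and $c_1=c_2=1$ in Lemma~\ref{le:dis-prognosis} (orthogonality of $I-P_\cL$ in $H$, commutation with $A$ and the identification of the $W_2$-norm with $\|A\cdot\|$, and the embedding constant $W_2\hookrightarrow H$), which is exactly what you verify before invoking the lemma. Your extra check that the constants are independent of $K,K_G,\al,\beta$ is a faithful elaboration, not a different argument.
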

\begin{proof}
  In this case $c_0=\la_1^{-1}$ and $c_1=c_2=1$.
\end{proof}

Now we are in position to obtain the main result.

\begin{theorem}\label{th:prognosis}
Assume that $\cL$ is a finite family of functionals on $H$ and there is a Lagrange interpolation operator $R_\cL$ possessing the properties:
\begin{equation}\label{cond-WH}
 \| 1-R_\cL\|_{H\mapsto H}\le c_1~~\mbox{and}~~\| 1-R_\cL\|_{W_2\mapsto W_2}\le c_2
\end{equation}
with the constants $c_1$ and $c_2$ independent of $\cL$ such that
$c_1<e^{a_0\al}$,
where $a_0$ is the constant in \eqref{S-sq1}.
Then there exists $\epsilon_*>0$ such that under the condition $\epsilon(W_1,H)\le \epsilon_*$
the prognosis in \eqref{progn1} is asymptotically reliable for every $u_0\in H$.
\par
In the case of the modes described in Example \ref{modes} there exists $N_*$
such that the prognosis \eqref{progn1} is asymptotically reliable
with $R_\cL=P_\cL$, where $P_\cL$ is given by \eqref{RL-opt}
with some $N\ge N_*$.
\end{theorem}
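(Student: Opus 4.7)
The plan is to combine three ingredients: the dissipativity of the prognostic iteration (Lemma~\ref{le:dis-prognosis}), the Ladyzhenskaya squeezing property (Proposition~\ref{pr:frame-for-kicks2}), and the smallness of the completeness defect $\epsilon_\cL(W_1,H)$. First I would invoke Lemma~\ref{le:dis-prognosis}---whose hypothesis $c_1<e^{a_0\al}$ is exactly the one assumed here---together with Corollary~\ref{co:absorbing} to deduce that for $n\ge m_*$ both $U(t_n)$ and $u_n$ lie in a ball of $W_2$, and in fact the whole pieces of trajectory $\{S_tU(t_{n-1})\}_{t\in[0,\beta]}$ and $\{S_tu_{n-1}\}_{t\in[0,\beta]}$ remain in a common bounded set $\sD\subset W_2$ to which the squeezing proposition applies. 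Subtracting~\eqref{progn1} from $U(t_n)=S_{\tau_n}U(t_{n-1})$ (with $\tau_n=t_n-t_{n-1}\in[\al,\beta]$) and using linearity of $R_\cL$ produces the key identity
\[
w_n:=U(t_n)-u_n=(I-R_\cL)\Delta_n,\qquad \Delta_n:=S_{\tau_n}U(t_{n-1})-S_{\tau_n}u_{n-1}.
\]

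Next I would set up a contraction $\|w_n\|_{W_1}\le\ga\|w_{n-1}\|_{W_1}$ with $\ga<1$ by splitting $\Delta_n=P_N\Delta_n+Q_N\Delta_n$ using the spectral projectors of the operator $A$ and treating the two pieces of $w_n=(I-R_\cL)(P_N+Q_N)\Delta_n$ separately. For the high-mode piece, Proposition~\ref{pr:frame-for-kicks2} gives $\|Q_N\Delta_n\|_{W_1}\le q\|w_{n-1}\|_{W_1}$ with $q<1$ freely chosen, and Hilbert-space interpolation between the hypotheses $\|I-R_\cL\|_{H\to H}\le c_1$ and $\|I-R_\cL\|_{W_2\to W_2}\le c_2$ bounds $\|(I-R_\cL)Q_N\Delta_n\|_{W_1}$ by $c_1^{1/2}c_2^{1/2}q\|w_{n-1}\|_{W_1}$. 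For the low-mode piece, $P_N\Delta_n$ lies in the finite-dimensional subspace $E_N=\spann\{e_1,\dots,e_N\}$; here I would exploit the Lagrange identity $R_\cL Q_\cL=0$ to write $(I-R_\cL)v=(I-R_\cL)P_\cL v+Q_\cL v$, control the second summand via the completeness-defect bound $\|Q_\cL v\|_H\le\epsilon_\cL\|v\|_{W_1}$ (which is the content of Theorem~\ref{th7.8.22}), and convert the resulting $H$-smallness back to $W_1$-smallness on $E_N$ through the finite-dimensional equivalence $\|v\|_{W_1}\le\la_N^{1/2}\|v\|_H$ for $v\in E_N$, using the uniform $W_2$-bounds from the first paragraph to absorb the $(I-R_\cL)P_\cL v$ factor.

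The order of quantifiers will be crucial: first pick $q<1$ so that $c_1^{1/2}c_2^{1/2}q<1/2$; then choose $N\ge N_*(q,\al,\beta,L)$ large enough for the squeezing estimate to hold; and \emph{only then} shrink $\epsilon_*$ so that the low-mode contribution---which depends on the now-fixed $N$ through $\la_N^{1/2}$---adds at most $1/2\cdot\|w_{n-1}\|_{W_1}$. This yields the contraction, and iteration gives $\|w_n\|_{W_1}\to 0$ geometrically. In the modes case, $R_\cL=P_\cL$ coincides with the spectral projector onto $E_N$, so the identity $(I-R_\cL)P_N\Delta_n\equiv 0$ makes the low-mode contribution vanish and the argument collapses to pure squeezing. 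The main technical obstacle I anticipate is precisely this low-mode estimate in the general Lagrange setting: the spectral factor $\la_N^{1/2}$ grows with $N$ while $N$ itself is dictated by the squeezing, so the smallness $\epsilon_*$ must be chosen after $N$ and in a way that dominates $\la_N^{1/2}$ by the hypothesized uniform constants $c_1,c_2$.
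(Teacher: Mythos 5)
Your overall skeleton agrees with the paper's argument: the error identity $U(t_n)-u_n=(I-R_\cL)\Delta_n$, the use of Lemma~\ref{le:dis-prognosis} to place both trajectories in a $W_2$-ball so that Proposition~\ref{pr:frame-for-kicks2} applies, the spectral splitting with the interpolation bound $\|I-R_\cL\|_{W_1\mapsto W_1}\le\sqrt{c_1c_2}$, the quantifier order (fix $N$ for squeezing first, then shrink $\epsilon_*$), and the observation that in the modes case the argument collapses to pure squeezing. The genuine gap is in your treatment of the low-mode piece, which is the heart of the general case. The decomposition $(I-R_\cL)v=(I-R_\cL)P_\cL v+Q_\cL v$ does not close: the term $(I-R_\cL)P_\cL P_N\Delta_n$ is of size $O(\|P_N\Delta_n\|_{W_1})$ with a constant that is not small (the uniform $W_2$-bounds only make it \emph{bounded}, which destroys the contraction), and the completeness defect controls $Q_\cL P_N\Delta_n$ only in the $H$-norm; since $Q_\cL$ (the orthoprojector in $W_1$ onto $\cL^\perp$) does not map the range of $P_N$ into itself, you cannot convert that $H$-smallness back to $W_1$ via the equivalence $\|v\|_{W_1}\le\la_N^{1/2}\|v\|$, which holds only for $v$ in the range of $P_N$.

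More importantly, your plan never invokes the two ingredients that actually bring $\epsilon(W_1,H)$ into the estimate. First, Proposition~\ref{pr:Lip-H} (the Lipschitz property in $H$, available precisely because the dissipativity step keeps one trajectory in a $W_2$-ball) gives $\|\Delta_n\|\le C_\beta(\vr)\|w_{n-1}\|$ with $w_{n-1}=U(t_{n-1})-u_{n-1}$. Second, because $R_\cL$ is a Lagrange operator, the construction \eqref{progn1} forces $R_\cL w_{n-1}=0$, i.e.\ $l_j(U(t_{n-1}))=l_j(u_{n-1})$ for all $j$; hence the completeness defect applies \emph{to $w_{n-1}$ itself}, giving $\|w_{n-1}\|\le\epsilon(W_1,H)\,\|w_{n-1}\|_{W_1}$. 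This is how the low-mode bound closes in the paper: $\|P_N\Delta_n\|_{W_1}\le\la_N^{1/2}\|\Delta_n\|\le\la_N^{1/2}C_\beta(\vr)\,\epsilon(W_1,H)\,\|w_{n-1}\|_{W_1}$, so that $\tilde q=\sqrt{c_1c_2}\,\bigl[q_N+\la_N^{1/2}\epsilon(W_1,H)C_\beta(\vr)\bigr]<1$ once $\epsilon_*$ is chosen after $N$. In your version the factor $\epsilon(W_1,H)$ never appears multiplying $\|w_{n-1}\|_{W_1}$, so the contraction cannot be established as written; replacing your low-mode step by the chain above repairs the proof.
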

  \begin{proof}
We obviously have that
    \begin{equation*}
U(t_n)-u_n= (1-R_\cL)[S_{t_n-t_{n-1}} U(t_{n-1})-  S_{t_n-t_{n-1}} u_{n-1}] ,~~~ n\ge m_*.
\end{equation*}
In the case of modes we  have that $I-R_\cL=Q_N$.
Therefore using Corollary~\ref{co:diss}  by
Proposition~\ref{pr:frame-for-kicks2}
we can choose $N_*$ such that and thus
      \begin{equation*}
\|U(t_n)-u_n\|_{W_1}= q \|U(t_{n-1})  - u_{n-1}\|_{W_1},~~~ n\ge m_*,
\end{equation*}
with $q<1$. This implies
   \[
\|U(t_n)-u_n\|_{W_1}\to0~~\mbox{as}~~ n\to+\infty
\]
with exponential speed. Therefore the statement of the theorem is valid
in the case of modes.
\par

It is obvious that under conditions \eqref{cond-WH} the hypotheses of Lemma~\ref{le:dis-prognosis}
are in force.
Therefore
in the general case Proposition~\ref{pr:frame-for-kicks2}  implies that
\begin{align}
\|S_{\Delta_n} & U(t_{n-1})-  S_{\Delta_n} u_{n-1}\|_{W_1} \notag \\ &\le  q_N \|U(t_{n-1})  - u_{n-1}\|_{W_1}
+\la_N^{1/2} \|S_{\Delta_n} U(t_{n-1})-  S_{\Delta_n} u_{n-1}\|
\label{progn4}
\end{align}
for $n\ge m_*$
with $\Delta_n =t_n-t_{n-1}$, where $q_N<1$ can be chosen as small as we need at the expense of $N$. By
Proposition~\ref{pr:Lip-H} we have that
\[
\|S_{\Delta_n} U(t_{n-1})-  S_{\Delta_n} u_{n-1}\|\le C_\beta(\vr)
\| U(t_{n-1})-   u_{n-1}\|, ~~ n\ge m_*.
\]
Since $l_j(U(t_{n-1}))=l_j(u_{n-1})$, this gives
\[
\|S_{\Delta_n} U(t_{n-1})-  S_{\Delta_n} u_{n-1}\|\le \epsilon(W_1,H)C_\beta(\vr)
\| U(t_{n-1})-   u_{n-1}\|_{W_1}, ~~ n\ge m_*.
\]
Thus \eqref{progn4} yields
\[
\|U(t_{n})-   u_{n}\|_{W_1} \le  \tilde{q} \|U(t_{n-1})  - u_{n-1}\|_{W_1}
\]
for $n\ge m_*$, where
\[
\tilde{q}=\|I-R_\cL\|_{W_1\mapsto W_1}\left[ q_N+\la^{1/2}\epsilon(W_1,H) C_\beta(\vr)\right].
\]
By the operators interpolation  from condition \eqref{cond-WH} we have that
 \[
 \|I-R_\cL\|_{W_1\mapsto W_1}\le \sqrt{c_1c_2}.
 \]
Hence we can choose $N$ and $\epsilon(W_1,H)$ such that $\tilde{q}<1$.
Therefore the prognosis is asymptotically reliable with exponential speed.
  \end{proof}
  We conclude our considerations with several remarks.
\begin{remark}\label{re:conclusion1}
{ \rm
As an example of set $\cL$ functionals $\{l_j\}$ satisfying \eqref{cond-WH}
we can consider {\em generalized modes}
which are defined by the formulas:
\[
l_j(u) =(Ke_j,u),~~~ j=1,\ldots, N,
\]
where $\{e_j\}$ is the eigen-basis of the operator $A$ and $K$ is a  linear invertible
self-adjoint operator in $H$  with maps $W_2$ into itself and is bounded in both spaces
$H$ and $W_2$. In this case the operator $R_\cL$  has the form 
\eqref{R-op} with $\psi_j=K^{-1}e_j$.
One can see that we can apply Theorem~\ref{th:prognosis} with $\al$ greater than $\frac1{a_0}\ln (1+\|K\|_{H\mapsto H})$.
}
\end{remark}
\begin{remark}\label{re:conclusion2}
{ \rm
Under the conditions of Theorem~\ref{th:prognosis} we also have that
\[
\lim_{t\to+\infty}\|U(t)-u(t)\|_{W_1}=0
 \]
for the prognostic trajectory given by \eqref{progn-traj}.
Thus the prognosis is also reliable in the sense used in  \cite{HO-Titi-data-as2011}.
}
\end{remark}

\begin{remark}\label{re:conclusion3}
{ \rm
The number of  functionals  which provides an asymptotically
reliable prognosis according to  Theorem~\ref{th:prognosis} is finite.
However the estimates for this number  which follows from the statement of theorem
are not optimal and even not constructive. The derivation of optimal bounds for the
numbers requires more careful analysis of  constants related to dissipativity
and squeezing properties of individual trajectories.
We refer to \cite{HO-Titi-data-as2011} for more constructive approach
based on the multipliers technique and 
 developed in the case of the 2D  Navier--Stokes equations for the reference solution from
 the global attractor.

}
\end{remark}


\end{document}